\newtheorem{theorem}{Theorem}
\theoremstyle{remark}
\newtheorem{definition}[theorem]{\sc Definition}
\newtheorem{remark}[theorem]{\sc Remark}
\newtheorem{example}[theorem]{\sc Example}
\newtheorem*{acknowledgment}{\sc Acknowledgment}
\numberwithin{equation}{section}
\begin{document}

\title{Fano generalized Bott manifolds}

\author{Yusuke Suyama}
\address{Department of Mathematics, Graduate School of Science, Osaka University,
1-1 Machikaneyama-cho, Toyonaka, Osaka 560-0043 JAPAN}
\email{y-suyama@cr.math.sci.osaka-u.ac.jp}

\subjclass[2010]{Primary 14M25; Secondary 14J45.}

\keywords{generalized Bott towers, toric Fano varieties, toric weak Fano varieties.}

\date{\today}


\begin{abstract}
We give a necessary and sufficient condition for a generalized Bott manifold
to be Fano or weak Fano.
As a consequence we characterize Fano Bott manifolds.
\end{abstract}

\maketitle

\section{Introduction}

An $m$-stage {\it generalized Bott tower} is a sequence of complex projective space bundles
\begin{equation*}
B_m\stackrel{\pi_m}{\longrightarrow}
B_{m-1}\stackrel{\pi_{m-1}}{\longrightarrow}\cdots\stackrel{\pi_2}{\longrightarrow}
B_1\stackrel{\pi_1}{\longrightarrow}B_0=\{pt\},
\end{equation*}
where $B_j=\mathbb{P}(\mathcal{L}_j^{(1)} \oplus
\cdots \oplus \mathcal{L}_j^{(n_j)} \oplus \mathcal{O}_{B_{j-1}})$
for line bundles $\mathcal{L}_j^{(1)}, \ldots, \mathcal{L}_j^{(n_j)}$ over $B_{j-1}$
and $\mathbb{P}(\cdot)$ denotes the projectivization.
For each $j=1, \ldots, m$,
we call $B_j$ in the sequence a $j$-stage {\it generalized Bott manifold}.
Generalized Bott towers were introduced
by Choi--Masuda--Suh \cite{CMS2}.
When $n_j=1$ for every $j$,
the sequence is called a {\it Bott tower}
and $B_j$ is called a $j$-stage {\it Bott manifold} \cite{GK}.

It is known that any generalized Bott manifold is a nonsingular projective toric variety.
Chary \cite{C} gave the explicit generators of the Kleiman--Mori cone of Bott manifolds
by using toric geometry.
The topology of generalized Bott manifolds was studied in \cite{CMS1, CMS2, PS}.
Recently, Hwang--Lee--Suh \cite{HLS} computed the Gromov width
of generalized Bott manifolds.

A nonsingular projective variety is said to be {\it Fano} (resp.\ {\it weak Fano})
if its anticanonical divisor is ample (resp.\ nef and big).
In this paper, we give a necessary and sufficient condition for a generalized Bott manifold
to be Fano or weak Fano.
To state our main theorem, we introduce some notation.
An $m$-stage generalized Bott manifold is determined by a collection of integers
\begin{equation*}
(a_{j, l}^{(k)})_{2 \leq j \leq m, 1 \leq k \leq n_j, 1 \leq l \leq j-1},
\end{equation*}
see Section 2 for details.
We define $a_{j, l}=(a_{j, l}^{(1)}, \ldots, a_{j, l}^{(n_j)}) \in \mathbb{Z}^{n_j}$
for $2 \leq j \leq m$ and $1 \leq l \leq j-1$.
For a positive integer $n$ and $x=(x_1, \ldots, x_n) \in \mathbb{Z}^n$, we define
$\mu(x)=\min\{0, x_1, \ldots, x_n\}$ and
$\nu(x)=(x_1+\cdots+x_n)-(n+1)\mu(x)$.
Note that $\mu(x) \leq 0$ and $\nu(x) \geq 0$ for any $x \in \mathbb{Z}^n$.
For $1 \leq p \leq m-1$ and $1 \leq q \leq m-p$,
we define $b_{p, q}$ recursively by $b_{p, 1}=a_{p+1, p}$ and
$b_{p, q}=a_{p+q, p}+\sum_{r=1}^{q-1}\mu(b_{p, r})a_{p+q, p+r}$ for $2 \leq q \leq m-p$.
The following is our main theorem:

\begin{theorem}\label{main}
Let $B_m$ be the $m$-stage generalized Bott manifold
determined by a collection $(a_{j, l}^{(k)})$. Then the following hold:
\begin{enumerate}
\item $B_m$ is Fano if and only if
$\sum_{q=1}^{m-p}\nu(b_{p, q}) \leq n_p$ for any $p=1, \ldots, m-1$.
\item $B_m$ is weak Fano if and only if
$\sum_{q=1}^{m-p}\nu(b_{p, q}) \leq n_p+1$ for any $p=1, \ldots, m-1$.
\end{enumerate}
\end{theorem}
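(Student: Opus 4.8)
The plan is to work entirely on the toric side, so I would begin by recalling from Section~2 the explicit fan $\Sigma$ of $B_m$ in the lattice $N=\bigoplus_{j=1}^m\mathbb{Z}^{n_j}$: for each stage $j$ there are rays $v_j^{(0)},v_j^{(1)},\ldots,v_j^{(n_j)}$, where $v_j^{(k)}=e_j^{(k)}$ ($k\ge1$) are the standard basis vectors of the $j$-th block and $v_j^{(0)}$ is the corresponding negative ray carrying the twisting data $a_{j,l}^{(k)}$. The maximal cones are in bijection with the tuples $(k_1,\ldots,k_m)$, $0\le k_j\le n_j$, the cone $\sigma(k_1,\ldots,k_m)$ being spanned by all rays except the $v_j^{(k_j)}$; each such cone is smooth and $-K_{B_m}=\sum_\rho D_\rho$. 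I would then use the standard criterion for smooth complete toric varieties: writing $m_\sigma\in M_\mathbb{R}=N^*\otimes\mathbb{R}$ for the unique functional with $\langle m_\sigma,v_\rho\rangle=-1$ on all $\rho\in\sigma(1)$, the divisor $-K_{B_m}$ is nef (resp.\ ample) if and only if $\langle m_\sigma,v_\rho\rangle\ge -1$ (resp.\ $>-1$) for every maximal cone $\sigma$ and every ray $\rho\notin\sigma(1)$. This turns both parts of Theorem~\ref{main} into a single family of linear inequalities.

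Next I would solve for $m_\sigma$ block by block. The rays $v_j^{(k)}=e_j^{(k)}$ force the $k$-th coordinate $\eta_j^{(k)}$ of the $j$-th block of $m_\sigma$ to equal $-1$ whenever $k\ne k_j$, so in each block only the coordinate indexed by $k_j$ is unknown; the equations from the included negative rays $v_l^{(0)}$ then fix these remaining coordinates by downward recursion on $l$. The payoff is that, for each fixed stage $p$, the inequality attached to the excluded ray $v_p^{(k_p)}$—whether $k_p=0$ or $k_p\ge1$—reduces to one and the same quantity
\[
E_p=n_p+\sum_{j>p}\sum_{k=1}^{n_j}a_{j,p}^{(k)}\,\eta_j^{(k)},
\]
which depends only on the higher choices $k_{p+1},\ldots,k_m$. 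Since each $\eta_j^{(k)}$ is an integer, $E_p\in\mathbb{Z}$, and the whole system is equivalent to requiring $E_p\ge -1$ (weak Fano), respectively $E_p\ge 0$ (Fano), for all $p$ and all admissible higher choices.

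The heart of the argument is to evaluate $\min_\sigma E_p$, which I would do by induction from the top stage downward. Each coordinate $\eta_j^{(k)}$ equals $-1$ except at the excluded index, where it equals the previously computed value $E_j$; substituting, the minimization over $(k_{p+1},\ldots,k_m)$ becomes a nested optimization in which the optimal choice at an intermediate stage $p+r$ contributes exactly the factor $\mu(b_{p,r})$: choosing $k_{p+r}=0$ gives contribution $0$, while choosing $k_{p+r}=k$ gives a contribution proportional to the coordinate $a^{(k)}$, so the minimum is the minimum of $0$ and those coordinates. The vectors $b_{p,q}=a_{p+q,p}+\sum_{r=1}^{q-1}\mu(b_{p,r})a_{p+q,p+r}$ are precisely the bookkeeping of how these intermediate factors accumulate along the tower, and a term-by-term comparison yields the clean identity
\[
\min_\sigma E_p=n_p-\sum_{q=1}^{m-p}\nu(b_{p,q}),
\]
each per-stage deficit being packaged by $\nu$. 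Granting this, $E_p\ge -1$ for all $\sigma$ is equivalent to $\sum_q\nu(b_{p,q})\le n_p+1$ and $E_p\ge 0$ for all $\sigma$ to $\sum_q\nu(b_{p,q})\le n_p$, which proves both statements at once.

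I expect the displayed identity for $\min_\sigma E_p$ to be the main obstacle, and two points need care. First, the nested minimization telescopes with the extremal $\mu(x)=\min\{0,x_1,\ldots,x_n\}$ only when each intermediate excess $E_{p+r}+1$ is nonnegative; this sign control is exactly the higher-stage weak Fano condition, so the induction must be organized so that, when stage $p$ is analyzed, the conditions for all stages $p'>p$ are already in force (equivalently, one establishes the minimum formula together with the statement that violating some higher condition forces the corresponding $E_{p'}$ below $-1$, which is consistent and does no harm). Second, the coupling whereby a single choice $k_{p+r}$ feeds simultaneously into several lower-stage terms must be matched precisely against the cross terms $\mu(b_{p,r})a_{p+q,p+r}$ in the recursion; carrying out this matching in general—rather than in the two- and three-stage cases, where it is transparent—is the genuine combinatorial content, and is where I would expect to spend most of the effort.
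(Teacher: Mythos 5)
Your overall strategy---replacing Batyrev's primitive-collection criterion by the elementary Cartier-data criterion for nef/ample toric divisors, solving for $m_\sigma$ block by block, and reducing both statements to the family of inequalities $E_p\ge -1$ (resp.\ $E_p\ge 0$)---is viable and genuinely different from the paper, which instead shows $\mathrm{PC}(\Delta)=\{P_1,\ldots,P_m\}$ with $P_p=\{u_p^0,\ldots,u_p^{n_p}\}$, computes each primitive relation explicitly, and reads off $\deg(P_p)=(n_p+1)-\sum_q\nu(b_{p,q})$. However, the step your proof actually rests on (``Granting this\ldots'') is the unconditional identity $\min_\sigma E_p=n_p-\sum_{q}\nu(b_{p,q})$, and that identity is \emph{false} in general. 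Take $m=3$, $n_1=n_2=n_3=1$, $a_{2,1}^{(1)}=1$, $a_{3,1}^{(1)}=0$, $a_{3,2}^{(1)}=-3$. Then $b_{1,1}=1$, $\mu(b_{1,1})=0$, $b_{1,2}=a_{3,1}^{(1)}+\mu(b_{1,1})a_{3,2}^{(1)}=0$, so $n_1-\nu(b_{1,1})-\nu(b_{1,2})=0$; but for the maximal cone $\sigma$ spanned by $u_1^0=(-1,1,0)$, $u_2^0=(0,-1,-3)$, $u_3^0=(0,0,-1)$ one solves $m_\sigma=(-1,-2,1)$, whence $E_1(\sigma)=\langle m_\sigma,u_1^1\rangle=-1<0$. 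The mechanism is exactly the coupling you flag at the end: $E_1=n_1-a_{2,1}^{(1)}+a_{2,1}^{(1)}(E_2+1)$, and here $E_2+1=-1<0$ while $a_{2,1}^{(1)}=1>0$, so the minimizing choice is \emph{not} the one recorded by $\mu$; this happens precisely when some higher stage already violates the weak Fano condition (here $\nu(b_{2,1})=3>n_2+1$).

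Your final paragraph does contain the correct repair, but it changes the architecture of the argument and is where essentially all of the work lies, none of which is carried out: (i) for necessity you do not need the minimum at all---it suffices to exhibit the ``greedy'' cone $k_{p+q}=i_{p,q}$, for which unrolling the recursion for $E_p$ gives $E_p=n_p-\sum_q\nu(b_{p,q})$ exactly, as an unconditional algebraic identity; (ii) for sufficiency you must argue by downward induction on $p$: once $E_{p'}+1\ge 0$ is known for all $p'>p$ and all cones, each coefficient $a_{p+q,p}^{(k_{p+q})}$ multiplying $E_{p+q}+1$ may be replaced by the corresponding $\mu$-value and the expansion of $E_{p+q}+1$ substituted, which telescopes to $E_p\ge n_p-\sum_q\nu(b_{p,q})$. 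As written, your proposal asserts the false identity first and hedges afterward, so the main displayed step is wrong even though the surrounding plan can be completed. Two smaller points: the Cartier-data criterion characterizes nefness only, so for weak Fano you also need the (easy) remark that a nef anticanonical divisor on a complete toric variety is automatically big, since its polytope contains the origin in its interior; and it is worth noting why the paper's route is shorter---the primitive-relation computation is an exact identity requiring no Fano hypotheses, so it never encounters the conditional minimization that your approach must control.
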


Theorem \ref{main} is proved by computing the degree
of each primitive collection of the associated fan.
In a paper of Chary \cite{C},
a characterization of Fano Bott manifolds was claimed,
but there exist counterexamples to his claim (see Example \ref{counterexample}).
As a consequence of Theorem \ref{main},
we give here a characterization of Fano Bott manifolds
(see Theorem \ref{Bott}).

The structure of the paper is as follows:
In Section 2, we recall the construction of the fan associated to a generalized Bott manifold.
In Section 3, we prove Theorem \ref{main} and give some examples.
In Section 4, we characterize Fano Bott manifolds.

\begin{acknowledgment}
The author wishes to thank Professor Akihiro Higashitani for his invaluable comments.
This work was supported by Grant-in-Aid for JSPS Fellows 18J00022.
\end{acknowledgment}

\section{Generalized Bott manifolds}

An $m$-stage {\it generalized Bott tower} is a sequence of complex projective space bundles
\begin{equation*}
B_m\stackrel{\pi_m}{\longrightarrow}
B_{m-1}\stackrel{\pi_{m-1}}{\longrightarrow}\cdots\stackrel{\pi_2}{\longrightarrow}
B_1\stackrel{\pi_1}{\longrightarrow}B_0=\{pt\},
\end{equation*}
where $B_j=\mathbb{P}(\mathcal{L}_j^{(1)} \oplus
\cdots \oplus \mathcal{L}_j^{(n_j)} \oplus \mathcal{O}_{B_{j-1}})$
for line bundles $\mathcal{L}_j^{(1)}, \ldots, \mathcal{L}_j^{(n_j)}$ over $B_{j-1}$.
We call $B_m$ in the sequence an $m$-stage {\it generalized Bott manifold}.
Since the Picard group of $B_{j-1}$
is isomorphic to $\mathbb{Z}^{j-1}$
for any $j=1, \ldots, m$ (see, for example \cite[Exercise II.7.9]{H}),
each line bundle $\mathcal{L}_j^{(k)}$
corresponds to a $(j-1)$-tuple of integers
$(a_{j, 1}^{(k)}, \ldots, a_{j, j-1}^{(k)}) \in \mathbb{Z}^{j-1}$.
Hence an $m$-stage generalized Bott manifold is determined by the collection of integers
\begin{equation*}
(a_{j, l}^{(k)})_{2 \leq j \leq m, 1 \leq k \leq n_j, 1 \leq l \leq j-1}.
\end{equation*}

We recall the construction of the fan $\Delta$ associated to the generalized Bott manifold $B_m$
determined by the collection $(a_{j, l}^{(k)})$.
We follow the notation used in \cite[Section 2]{HLS}.
Let $n=n_1+\cdots+n_m$
and let $e_1^1, \ldots, e_1^{n_1}, \ldots, e_m^1, \ldots, e_m^{n_m}$
be the standard basis for $\mathbb{Z}^n$.
For $l=1, \ldots, m$, we define
\begin{equation*}
u_l^0=-\sum_{k=1}^{n_l}e_l^k+\sum_{j=l+1}^m\sum_{k=1}^{n_j}a_{j, l}^{(k)}e_j^k
\end{equation*}
and $u_l^k=e_l^k$ for $k=1, \ldots, n_l$.
Then the set $\Delta$ of all $n$-dimensional cones of the form
\begin{equation*}
\sum_{l=1}^m(\mathbb{R}_{\geq0}u_l^0+\cdots+\widehat{\mathbb{R}_{\geq0}u_l^{k_l}}
+\cdots+\mathbb{R}_{\geq0}u_l^{n_l}) \subset \mathbb{R}^n
\end{equation*}
with $0 \leq k_l \leq n_l$ for $1 \leq l \leq m$ and their faces
forms a nonsingular complete fan in $\mathbb{R}^n$,
and $B_m$ is the toric variety associated to $\Delta$.

\begin{example}
\begin{enumerate}
\item Let $m=2, n_1=1, n_2=1$.
Then we have
\begin{equation*}
u_1^0=\left(\begin{array}{c} -1 \\ a_{2, 1}^{(1)} \end{array}\right),\quad
u_1^1=\left(\begin{array}{c} 1 \\ 0 \end{array}\right),\quad
u_2^0=\left(\begin{array}{c} 0 \\ -1 \end{array}\right),\quad
u_2^1=\left(\begin{array}{c} 0 \\ 1 \end{array}\right).\quad
\end{equation*}
The fan associated to the 2-stage Bott manifold $B_2$ consists of the cones
\begin{equation*}
\mathbb{R}_{\geq0}u_1^1+\mathbb{R}_{\geq0}u_2^1,\quad
\mathbb{R}_{\geq0}u_1^1+\mathbb{R}_{\geq0}u_2^0,\quad
\mathbb{R}_{\geq0}u_1^0+\mathbb{R}_{\geq0}u_2^1,\quad
\mathbb{R}_{\geq0}u_1^0+\mathbb{R}_{\geq0}u_2^0
\end{equation*}
and their faces.
Thus $B_2$ is a Hirzebruch surface of degree $a_{2, 1}^{(1)}$.
\item Suppose that $a_{j, l}^{(k)}=0$
for any $2 \leq j \leq m, 1 \leq k \leq n_j, 1 \leq l \leq j-1$.
Then the generalized Bott manifold $B_m$
is the product $\mathbb{P}^{n_1}\times\cdots\times\mathbb{P}^{n_m}$.
\end{enumerate}
\end{example}

\section{Proof of Theorem \ref{main}}

First we recall the notion of primitive collections, see \cite{B91, B99} for details.

\begin{definition}
Let $\Delta$ be a nonsingular complete fan in $\mathbb{R}^n$
and $G(\Delta)$ be the set of all primitive generators of $\Delta$.
\begin{enumerate}
\item A nonempty subset $P$ of $G(\Delta)$ is called a {\it primitive collection} of $\Delta$
if $\sum_{u \in P}\mathbb{R}_{\geq0}u$ is not a cone in $\Delta$
but $\sum_{u \in P \setminus \{v\}}\mathbb{R}_{\geq0}u \in \Delta$
for every $v \in P$.
We denote by $\mathrm{PC}(\Delta)$ the set of all primitive collections of $\Delta$.
\item Let $P=\{u_1, \ldots, u_k\} \in \mathrm{PC}(\Delta)$.
Then there exists a unique cone $\sigma$ in $\Delta$
such that $u_1+\cdots+u_k$ is in the relative interior of $\sigma$.
Let $v_1, \ldots, v_l$ be primitive generators of $\sigma$.
Then $u_1+\cdots+u_k=a_1v_1+\cdots+a_lv_l$
for some positive integers $a_1, \ldots, a_l$.
We call this relation the {\it primitive relation} for $P$.
We define $\mathrm{deg}(P)=k-(a_1+\cdots+a_l)$
and we call it the {\it degree} of $P$.
\end{enumerate}
\end{definition}

\begin{theorem}[{\cite[Proposition 2.3.6]{B99}}]\label{degree}
Let $X(\Delta)$ be an $n$-dimensional nonsingular projective toric variety.
Then the following hold:
\begin{enumerate}
\item $X(\Delta)$ is Fano if and only if
$\mathrm{deg}(P)>0$ for every $P \in \mathrm{PC}(\Delta)$.
\item $X(\Delta)$ is weak Fano if and only if
$\mathrm{deg}(P) \geq 0$ for every $P \in \mathrm{PC}(\Delta)$.
\end{enumerate}
\end{theorem}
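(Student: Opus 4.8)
The plan is to translate the geometric conditions ``Fano'' and ``weak Fano'' into positivity statements on the cone of curves, and then to read those statements off from the degrees of primitive collections. By Kleiman's ampleness criterion, $-K_{X(\Delta)}$ is ample if and only if $-K_{X(\Delta)} \cdot C > 0$ for every nonzero class $C$ in the Mori cone $\overline{\mathrm{NE}}(X(\Delta))$, and it is nef if and only if all these intersection numbers are $\geq 0$. For the weak Fano statement one also needs bigness, but this comes for free: the anticanonical divisor $-K_{X(\Delta)} = \sum_{u \in G(\Delta)} D_u$ (where $D_u$ is the torus-invariant prime divisor attached to $u$) corresponds to the polytope $P_{-K} = \{m : \langle m, u\rangle \geq -1 \text{ for all } u \in G(\Delta)\}$, which contains the origin in its interior; hence $P_{-K}$ is $n$-dimensional and $(-K_{X(\Delta)})^n = n!\,\mathrm{vol}(P_{-K}) > 0$, so a nef $-K_{X(\Delta)}$ is automatically big. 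Thus weak Fano is equivalent to $-K_{X(\Delta)}$ being nef, and both parts reduce to computing $-K_{X(\Delta)} \cdot C$ on generators of $\overline{\mathrm{NE}}(X(\Delta))$.

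Next I would make the intersection numbers combinatorially explicit. For a nonsingular complete toric variety the divisor sequence $0 \to M \to \mathbb{Z}^{G(\Delta)} \to \mathrm{Pic}(X(\Delta)) \to 0$ dualizes to an exact sequence $0 \to A_1(X(\Delta)) \to \mathbb{Z}^{G(\Delta)} \to N \to 0$ whose second map sends the standard basis vector $e_u$ to the primitive generator $u$. Hence a $1$-cycle class is exactly an integral relation $\sum_{u} c_u\, u = 0$ among the primitive generators, and under the intersection pairing $D_{u'} \cdot (c_u)_u = c_{u'}$. Consequently $-K_{X(\Delta)} \cdot (c_u)_u = \sum_u c_u$. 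Applying this to the primitive relation $u_1 + \cdots + u_k = a_1 v_1 + \cdots + a_l v_l$ of a primitive collection $P$, whose relation vector has coordinate sum $k - (a_1 + \cdots + a_l)$, gives the key identity
\begin{equation*}
-K_{X(\Delta)} \cdot r(P) = \mathrm{deg}(P),
\end{equation*}
where $r(P) \in A_1(X(\Delta))$ denotes the class of that relation.

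The main obstacle is to show that the classes $r(P)$, for $P \in \mathrm{PC}(\Delta)$, generate $\overline{\mathrm{NE}}(X(\Delta))$. I would start from Reid's description, which says that $\overline{\mathrm{NE}}(X(\Delta))$ is generated by the classes of the torus-invariant curves, one for each wall (codimension-one cone) of $\Delta$ via its wall relation. The remaining content is a purely combinatorial decomposition: every wall relation can be written as a nonnegative combination of primitive relations. I would prove this by induction on the number of generators occurring with positive coefficient in a given effective relation, at each step subtracting a primitive relation supported on a minimal non-face among those generators and checking that the remaining coefficients stay nonnegative. This is precisely where nonsingularity and completeness of $\Delta$ enter, and it is the technical heart of the argument.

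Granting the generation statement, the theorem follows at once. Since the $r(P)$ positively span $\overline{\mathrm{NE}}(X(\Delta))$, we have $-K_{X(\Delta)} \cdot C > 0$ for every nonzero $C \in \overline{\mathrm{NE}}(X(\Delta))$ if and only if $-K_{X(\Delta)} \cdot r(P) > 0$ for every $P$, that is, if and only if $\mathrm{deg}(P) > 0$ for every $P \in \mathrm{PC}(\Delta)$; by Kleiman's criterion this is exactly the Fano condition, proving (1). Replacing the strict inequalities by weak ones characterizes nefness of $-K_{X(\Delta)}$, which by the bigness remark above is equivalent to weak Fano, proving (2).
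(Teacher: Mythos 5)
The paper contains no proof of this statement at all: it is quoted from Batyrev \cite[Proposition 2.3.6]{B99} and used as a black box, so there is no in-paper argument to compare yours against line by line. Your outline is in fact a reconstruction of the standard proof from the literature, and most of its steps are correct: the reduction via Kleiman's criterion; the identification of $A_1(X(\Delta))$ with the lattice of relations $\sum_{u}c_u u=0$ together with the pairing $D_{u'}\cdot(c_u)_u=c_{u'}$, which gives $-K_{X(\Delta)}\cdot r(P)=\mathrm{deg}(P)$; and the observation that a nef anticanonical divisor is automatically big, since the polytope $\{m:\langle m,u\rangle\geq-1 \text{ for all } u\in G(\Delta)\}$ is bounded (completeness) and contains the origin in its interior, so $(-K_{X(\Delta)})^n=n!\,\mathrm{vol}>0$.

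The genuine gap is in the step you yourself flag as the technical heart: that the classes $r(P)$, $P\in\mathrm{PC}(\Delta)$, generate $\overline{\mathrm{NE}}(X(\Delta))$ (this is Batyrev's theorem, \cite[Theorem 2.15]{B91}). Your proposed induction ``on the number of generators occurring with positive coefficient'' would fail: when you subtract a positive multiple of the primitive relation $r(P)$, with $\sum_{u\in P}u=\sum_i a_i v_i$, from a wall relation, the coefficients of the $v_i$ in the $1$-cycle representation \emph{increase}, so entries that were zero or negative can become positive, the number of positive-coefficient generators can grow, and the process need not terminate. Likewise ``checking that the remaining coefficients stay nonnegative'' cannot be the right invariant, since relation vectors of effective classes genuinely have negative entries (already for a wall relation on a Hirzebruch surface). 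The standard repair is Batyrev's: fix an ample divisor $D$ and induct on the positive integer $D\cdot\gamma$, after showing (i) that the positive support of the relation cannot span a cone of $\Delta$ --- for a wall relation this follows because the point $u+u'+\sum_{c_w>0}c_w w$ would lie both in the relative interior of that cone and in the wall $\tau$, forcing $u,u'\in\tau$ --- hence contains a primitive collection $P$, and (ii) that the class $\gamma-r(P)$, suitably scaled, is again effective, so that $D\cdot r(P)>0$ and the degree strictly drops; both points need an argument for the intermediate classes, not only for wall relations. Without carrying this out, or without simply citing \cite{B91} (as the paper does in effect by citing \cite{B99}), your proof of the generation statement is incomplete; everything downstream of it is fine.
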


Let $B_m$ be the $m$-stage generalized Bott manifold
determined by a collection of integers
$(a_{j, l}^{(k)})_{2 \leq j \leq m, 1 \leq k \leq n_j, 1 \leq l \leq j-1}$.
We will use the notation in Section 2 freely.
For $2 \leq j \leq m$ and $1 \leq l \leq j-1$,
we define $a_{j, l}^{(0)}=0$.
For $1 \leq p \leq m-1$ and $1 \leq q \leq m-p$,
we write $b_{p, q}=(b_{p, q}^{(1)}, \ldots, b_{p, q}^{(n_{p+q})}) \in \mathbb{Z}^{n_{p+q}}$
and define $b_{p, q}^{(0)}=0$.
We choose $i_{p, q} \in \{0, 1, \ldots, n_{p+q}\}$ so that $b_{p, q}^{(i_{p, q})}=\mu(b_{p, q})$.
Note that if $\min\{b_{p, q}^{(1)}, \ldots, b_{p, q}^{(n_{p+q})}\}>0$,
then $i_{p, q}$ must be $0$.

\begin{proof}[Proof of Theorem \ref{main}]
For $p=1, \ldots, m$,
let $P_p=\{u_p^0, \ldots, u_p^{n_p}\}$.
Then we can see that $\mathrm{PC}(\Delta)=\{P_1, \ldots, P_m\}$.
Since $u_m^0+\cdots+u_m^{n_m}=0$,
we have $\mathrm{deg}(P_m)=n_m+1>0$.

We regard $a_{j, l}$ and $b_{p, q}$ as column vectors.
For $p=1, \ldots, m-1$, we have
\begin{align*}
&(u_p^0+\cdots+u_p^{n_p})+\sum_{q=1}^{m-p}\left(b_{p, q}^{(i_{p, q})}u_{p+q}^0
+\sum_{k=1}^{n_{p+q}}(b_{p, q}^{(i_{p, q})}-b_{p, q}^{(k)})u_{p+q}^k\right)\\
=&\left(\begin{array}{c}0_{n_1} \\ \vdots \\ 0_{n_p} \\ a_{p+1, p} \\ a_{p+2, p} \\
\vdots \\ a_{m-1, p} \\ a_{m, p} \end{array}\right)
+\left(\begin{array}{c}0_{n_1} \\ \vdots \\ 0_{n_p} \\ -b_{p, 1} \\
{\scriptstyle b_{p, 1}^{(i_{p, 1})}a_{p+2, p+1}} \\
\vdots \\ {\scriptstyle b_{p, 1}^{(i_{p, 1})}a_{m-1, p+1}} \\
{\scriptstyle b_{p, 1}^{(i_{p, 1})}a_{m, p+1}} \end{array}\right)
+\left(\begin{array}{c}0_{n_1} \\ \vdots \\ 0_{n_p} \\ 0_{n_{p+1}} \\
-b_{p, 2} \\
\vdots \\ {\scriptstyle b_{p, 2}^{(i_{p, 2})}a_{m-1, p+2}} \\
{\scriptstyle b_{p, 2}^{(i_{p, 2})}a_{m, p+2}} \end{array}\right)
+\cdots+\left(\begin{array}{c}0_{n_1} \\ \vdots \\ 0_{n_p} \\ 0_{n_{p+1}} \\ 0_{n_{p+2}} \\
\vdots \\ 0_{n_{m-1}} \\ -b_{p, m-p} \end{array}\right)\\
=&0,
\end{align*}
where $0_{n_l}$ is the column vector in $\mathbb{Z}^{n_l}$ with all zero entries.
Since $b_{p, q}^{(i_{p, q})} \leq 0$
and $b_{p, q}^{(i_{p, q})}-b_{p, q}^{(k)} \leq 0$
for any $1 \leq p \leq m-1, 1 \leq q \leq m-p, 1 \leq k \leq n_{p+q}$,
it follows that the primitive relation for $P_p$ is given by
\begin{equation}\label{rel}
u_p^0+\cdots+u_p^{n_p}
=\sum_{q=1}^{m-p}\left((-b_{p, q}^{(i_{p, q})})u_{p+q}^0
+\sum_{k=1}^{n_{p+q}}(b_{p, q}^{(k)}-b_{p, q}^{(i_{p, q})})u_{p+q}^k\right)
\end{equation}
for any $p=1, \ldots, m-1$.
Hence
\begin{align*}
\mathrm{deg}(P_p)&=(n_p+1)+\sum_{q=1}^{m-p}\left(b_{p, q}^{(i_{p, q})}
+\sum_{k=1}^{n_{p+q}}(b_{p, q}^{(i_{p, q})}-b_{p, q}^{(k)})\right)\\
&=(n_p+1)+\sum_{q=1}^{m-p}(-(b_{p, q}^{(1)}+\cdots+b_{p, q}^{(n_{p+q})})
+(n_{p+q}+1)b_{p, q}^{(i_{p, q})})\\
&=(n_p+1)-\sum_{q=1}^{m-p}((b_{p, q}^{(1)}+\cdots+b_{p, q}^{(n_{p+q})})
-(n_{p+q}+1)\mu(b_{p, q}))\\
&=(n_p+1)-\sum_{q=1}^{m-p}\nu(b_{p, q})
\end{align*}
for any $p=1, \ldots, m-1$.
Therefore by Theorem \ref{degree},
$B_m$ is Fano (resp.\ weak Fano) if and only if
$\sum_{q=1}^{m-p}\nu(b_{p, q}) \leq n_p$
(resp.\ $\sum_{q=1}^{m-p}\nu(b_{p, q}) \leq n_p+1$)
for any $p=1, \ldots, m-1$.
This completes the proof of Theorem \ref{main}.
\end{proof}

\begin{remark}
For $p=1, \ldots, m$, let
\begin{equation*}
\tau_p=\sum_{l=1}^{p-1}\sum_{k=1}^{n_l}\mathbb{R}_{\geq 0}u_l^k
+\sum_{k=1}^{n_p-1}\mathbb{R}_{\geq 0}u_p^k
+\sum_{q=1}^{m-p}\sum_{\substack{0 \leq k \leq n_{p+q},\\k \ne i_{p, q}}}
\mathbb{R}_{\geq 0}u_{p+q}^k \subset \mathbb{R}^n.
\end{equation*}
Then $\tau_1, \ldots, \tau_m$ are $(n-1)$-dimensional cones in $\Delta$.
For any $p=1, \ldots, m$,
the wall relation for $\tau_p$
coincides with the primitive relation for $P_p$
(note that the coefficient of $u_{p+q}^{i_{p, q}}$ in (\ref{rel}) is zero for all $p, q$).
Thus the cone of effective 1-cycles of $B_m$ is generated by
the classes of the torus-invariant curves corresponding to $\tau_1, \ldots, \tau_m$.
\end{remark}

\begin{example}\label{example}
\begin{enumerate}
\item Let $m=4, n_1=3, n_2=n_3=n_4=2$.
We consider the 4-stage generalized Bott manifold $B_4$ determined by
\begin{align*}
&a_{2, 1}=(-1, -1),\\
&a_{3, 1}=(0, 0),\quad
a_{3, 2}=(0, -1),\\
&a_{4, 1}=(0, 2),\quad
a_{4, 2}=(0, 1),\quad
a_{4, 3}=(0, 1).
\end{align*}
Then we have
\begin{align*}
&b_{1, 1}=a_{2, 1}=(-1, -1),\\
&b_{1, 2}=a_{3, 1}+\mu(b_{1, 1})a_{3, 2}
=(0, 0)+(-1)(0, -1)=(0, 1),\\
&b_{1, 3}=a_{4, 1}+\mu(b_{1, 1})a_{4, 2}+\mu(b_{1, 2})a_{4, 3}
=(0, 2)+(-1)(0, 1)+0(0, 1)=(0, 1),\\
&b_{2, 1}=a_{3, 2}=(0, -1),\\
&b_{2, 2}=a_{4, 2}+\mu(b_{2, 1})a_{4, 3}
=(0, 1)+(-1)(0, 1)=(0, 0),\\
&b_{3, 1}=a_{4, 3}=(0, 1).
\end{align*}
Since
\begin{align*}
\nu(b_{1, 1})+\nu(b_{1, 2})+\nu(b_{1, 3})&=1+1+1=3 \leq n_1,\\
\nu(b_{2, 1})+\nu(b_{2, 2})&=2+0=2 \leq n_2,\\
\nu(b_{3, 1})&=1 \leq n_3,
\end{align*}
the generalized Bott manifold $B_4$ is Fano.
\item Let $m=3, n_1=n_2=3, n_3=2$.
We consider the 3-stage generalized Bott manifold $B_3$ determined by
\begin{equation*}
a_{2, 1}=(0, -1, -1),\quad
a_{3, 1}=(-4, -2),\quad
a_{3, 2}=(-2, -1).
\end{equation*}
Then we have
\begin{align*}
&b_{1, 1}=a_{2, 1}=(0, -1, -1),\\
&b_{1, 2}=a_{3, 1}+\mu(b_{1, 1})a_{3, 2}=(-4, -2)+(-1)(-2, -1)=(-2, -1),\\
&b_{2, 1}=a_{3, 2}=(-2, -1).
\end{align*}
Since $\nu(b_{1, 1})+\nu(b_{1, 2})=2+3=5>n_1+1$,
the generalized Bott manifold $B_3$ is not weak Fano.
\item For $m=2$ and positive integers $n_1, n_2$,
a 2-stage generalized Bott manifold $B_2$ is Fano if and only if
$(a_{2, 1}^{(1)}+\cdots+a_{2, 1}^{(n_2)})-(n_2+1)\mu(a_{2, 1}) \leq n_1$.
Since any nonsingular projective toric variety with Picard number two
is a 2-stage generalized Bott manifold (see \cite{K}),
these examples exhaust all nonsingular toric Fano varieties with Picard number two.
\end{enumerate}
\end{example}

\section{Fano Bott manifolds}

Let $(\beta_{ij})$ be an $r \times r$ upper triangular integer matrix
whose diagonal entries are one.
We denote by $e_1^+, \ldots, e_r^+$ the standard basis for $\mathbb{Z}^r$
and we put $e_i^-=-\sum_{j=i}^r\beta_{ij}e_j^+$ for $i=1, \ldots, r$.
We define $\Delta$ to be the fan in $\mathbb{R}^r$ such that
$G(\Delta)=\{e_1^+, \ldots, e_r^+, e_1^-, \ldots, e_r^-\}$
and $\mathrm{PC}(\Delta)=\{\{e_i^+, e_i^-\} \mid 1 \leq i \leq r\}$.
The associated toric variety $X_r$ is an $r$-stage Bott manifold.
In the notation of Section 2,
the Bott manifold $X_r$ coincides with $B_m$ determined by
$m=r, n_1=\cdots=n_r=1, a_{j, l}^{(1)}=-\beta_{lj}$ for any $2 \leq j \leq r, 1 \leq l \leq j-1$.

For $i=1, \ldots, r$, we define
\begin{equation*}
\eta_i^+=\{i<j \leq r \mid \beta_{ij}>0\}, \quad
\eta_i^-=\{i<j \leq r \mid \beta_{ij}<0\}.
\end{equation*}
Chary \cite[Theorem 6.3]{C} claimed that $X_r$ is Fano if and only if
for any $i=1, \ldots, r$, at least one of the following holds:
\begin{enumerate}
\item $|\eta_i^+|=0$ and $|\eta_i^-| \leq 1$.
If $|\eta_i^-|=1$ and $\eta_i^-=\{l\}$, then $\beta_{il}=-1$.
\item $|\eta_i^-|=0$ and $|\eta_i^+| \leq 1$.
If $|\eta_i^+|=1$ and $\eta_i^+=\{m\}$, then $\beta_{im}=1$ and $\beta_{mk}=0$ for all $k>m$.
\end{enumerate}
This condition is sufficient but not necessary.
In fact, the following gives a counterexample to the claim:

\begin{example}\label{counterexample}
Suppose that $r \geq 3$ and $\beta_{ij}=1$ for all $1 \leq i \leq r-1$ and $i+1 \leq j \leq r$.
Then $m=r, n_1=\cdots=n_r=1$
and $a_{j, l}^{(1)}=-1$ for any $2 \leq j \leq r, 1 \leq l \leq j-1$
in the notation of Section 2.
We have $b_{p, 1}^{(1)}=-1$
and $b_{p, 2}^{(1)}=\cdots=b_{p, r-p}^{(1)}=0$ for any $p=1, \ldots, r-1$.
Since $\nu(b_{p, 1})+\cdots+\nu(b_{p, r-p})=1$ for any $p=1, \ldots, r-1$,
Theorem \ref{main} (1) implies that $X_r$ is Fano,
but we have $|\eta_1^+|=r-1 \geq 2$.
\end{example}

From Theorem \ref{main} (1),
we deduce a correct characterization of Fano Bott manifolds.
We use the notation in Section 2.
We denote $a_{j, l}^{(1)}$ and $b_{p, q}^{(1)}$
simply by $a_{j, l}$ and $b_{p, q}$ respectively.
Note that $\mu(x)=\min\{0, x\}$ and $\nu(x)=|x|$ for any $x \in \mathbb{Z}$.

\begin{theorem}\label{Bott}
Let $B_m$ be the $m$-stage Bott manifold
determined by a collection $(a_{j, l})_{2 \leq j \leq m, 1 \leq l \leq j-1}$.
Then $B_m$ is Fano if and only if for any $p=1, \ldots, m-1$,
one of the following conditions holds:
\begin{enumerate}
\item $a_{p+1 ,p}=\cdots=a_{m, p}=0$.
\item There exists an integer $q$ with $1 \leq q \leq m-p$ such that
$a_{p+q, p}=1$ and $a_{p+r, p}=0$ for all $r \ne q$.
\item There exists an integer $q$ with $1 \leq q \leq m-p$ such that
\begin{equation*}
a_{p+r, p}=\left\{\begin{array}{ll}
0 & (1 \leq r \leq q-1), \\
-1 & (r=q), \\
a_{p+r, p+q} & (q+1 \leq r \leq m-p). \end{array}\right.
\end{equation*}
\end{enumerate}
\end{theorem}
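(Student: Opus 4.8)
The plan is to reduce everything to the numerical criterion of Theorem \ref{main}~(1). Since $n_1 = \cdots = n_m = 1$ and $\nu(x) = |x|$ for $x \in \mathbb{Z}$, the manifold $B_m$ is Fano if and only if, for every $p = 1, \ldots, m-1$,
\begin{equation*}
\sum_{q=1}^{m-p} |b_{p,q}| \leq 1.
\end{equation*}
Because the $b_{p,q}$ are integers, this inequality holds exactly when either all of $b_{p,1}, \ldots, b_{p,m-p}$ vanish, or precisely one of them equals $\pm 1$ and the rest vanish. I will show that these three possibilities correspond to conditions (1), (2), (3) respectively, working one value of $p$ at a time.

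First I would analyze the recursion $b_{p,q} = a_{p+q,p} + \sum_{r=1}^{q-1} \mu(b_{p,r}) a_{p+q, p+r}$. Let $q_0$ denote the smallest index with $b_{p, q_0} \neq 0$ (if no such index exists, all $b_{p,q}$ vanish). Since $\mu(b_{p,r}) = 0$ for every $r < q_0$, the recursion collapses to $b_{p,q} = a_{p+q,p}$ for all $q \leq q_0$; hence $a_{p+q,p} = 0$ for $q < q_0$ and $b_{p,q_0} = a_{p+q_0, p} \in \{+1, -1\}$. For the remaining indices $q > q_0$, the only term that can contribute a nonzero factor $\mu(b_{p,r})$ is $r = q_0$, so $b_{p,q} = a_{p+q,p} + \mu(b_{p,q_0}) a_{p+q, p+q_0}$, where this identity is established by an easy induction that simultaneously confirms $b_{p,q} = 0$ for $q_0 < q$.

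The case split then falls out immediately. If no nonzero $b_{p,q}$ exists, then $a_{p+q,p} = 0$ for all $q$, which is condition (1). If $b_{p,q_0} = +1$, then $\mu(b_{p,q_0}) = 0$, so $b_{p,q} = a_{p+q,p}$ for $q > q_0$; forcing these to vanish yields $a_{p+q_0, p} = 1$ with all other $a_{p+r,p} = 0$, that is, condition (2). If $b_{p,q_0} = -1$, then $\mu(b_{p,q_0}) = -1$, so $b_{p,q} = a_{p+q,p} - a_{p+q, p+q_0}$ for $q > q_0$; forcing these to vanish gives $a_{p+r,p} = a_{p+r, p+q_0}$ for $r > q_0$, which together with $a_{p+q_0,p} = -1$ and the earlier vanishing is condition (3). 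The converse direction runs the same computation forward: substituting any one of (1), (2), (3) into the recursion and inducting on $q$ shows that at most one $b_{p,q}$ is nonzero and equals $\pm 1$, so the sum of absolute values is at most one.

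The only point requiring care is the inductive verification that, in the sign-$(-1)$ case, the single surviving term $\mu(b_{p,q_0}) a_{p+q, p+q_0}$ produces exactly the cancellation recorded in condition (3) for every subsequent $q$; once one checks that no later $b_{p,q}$ can reintroduce a nonzero value of $\mu$, the bookkeeping is routine. I expect this propagation argument to be the main, though mild, obstacle, and everything else to be direct substitution into Theorem \ref{main}.
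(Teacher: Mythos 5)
Your proposal is correct and follows essentially the same route as the paper's own proof: both reduce to the criterion $\sum_{q}\nu(b_{p,q})=\sum_q|b_{p,q}|\le 1$ from Theorem \ref{main}~(1), identify the unique nonzero $b_{p,q_0}=\pm1$ (the paper's Case 2 with Subcases 2.1/2.2), use the recursion with $\mu(b_{p,r})=0$ for $r\ne q_0$ to translate into conditions (1)--(3), and prove sufficiency by the same forward induction. Your only cosmetic deviation is organizing the necessity argument around the first nonzero index $q_0$, and note that there the vanishing of $b_{p,q}$ for $q>q_0$ is immediate from the Fano bound rather than needing the induction you mention; the induction is genuinely required only in the converse direction, exactly as in the paper.
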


\begin{proof}
First we show the necessity. Suppose that $B_m$ is Fano. Let $1 \leq p \leq m-1$.
Then we have $\nu(b_{p, 1})+\cdots+\nu(b_{p, m-p}) \leq 1$ by Theorem \ref{main} (1).
Hence it falls into the following two cases:

{\it Case 1}. Suppose that $\nu(b_{p, 1})=\cdots=\nu(b_{p, m-p})=0$.
It follows that $b_{p, 1}=\cdots=b_{p, m-p}=0$
and thus $\mu(b_{p, 1})=\cdots=\mu(b_{p, m-p})=0$.
For any $1 \leq q \leq m-p$,
we have $0=b_{p, q}=a_{p+q, p}+\sum_{l=1}^{q-1}\mu(b_{p, l})a_{p+q, p+l}=a_{p+q, p}$.
It satisfies the condition (1).

{\it Case 2}. Suppose that there exists an integer $q$ with $1 \leq q \leq m-p$
such that $\nu(b_{p, q})=1$ and $\nu(b_{p, r})=0$ for any $r \ne q$.
It follows that $b_{p, r}=0$ for any $r \ne q$.
We have $0=b_{p, r}=a_{p+r, p}+\sum_{l=1}^{r-1}\mu(b_{p, l})a_{p+r, p+l}=a_{p+r, p}$
for any $1 \leq r \leq q-1$
and $b_{p, q}=a_{p+q, p}+\sum_{l=1}^{q-1}\mu(b_{p, l})a_{p+q, p+l}=a_{p+q, p}$.
Since $\nu(a_{p+q, p})=\nu(b_{p, q})=1$,
we must have $a_{p+q, p}=\pm 1$.

{\it Subcase 2.1}. Suppose $a_{p+q, p}=1$.
Then $\mu(b_{p, q})=\mu(a_{p+q, p})=0$.
For any $q+1 \leq r \leq m-p$,
we have $0=b_{p, r}=a_{p+r, p}+\sum_{l=1}^{r-1}\mu(b_{p, l})a_{p+r, p+l}=a_{p+r, p}$.
It satisfies the condition (2).

{\it Subcase 2.2}. Suppose $a_{p+q, p}=-1$.
Then $\mu(b_{p, q})=\mu(a_{p+q, p})=-1$.
For any $q+1 \leq r \leq m-p$,
we have
\begin{align*}
0&=b_{p, r}=a_{p+r, p}+\sum_{l=1}^{r-1}\mu(b_{p, l})a_{p+r, p+l}
=a_{p+r, p}+\mu(b_{p, q})a_{p+r, p+q}\\
&=a_{p+r, p}-a_{p+r, p+q}.
\end{align*}
Thus $a_{p+r, p}=a_{p+r, p+q}$ for any $q+1 \leq r \leq m-p$.
It satisfies the condition (3).

We show the sufficiency. Let $1 \leq p \leq m-1$.
\begin{enumerate}
\item Suppose $a_{p+1 ,p}=\cdots=a_{m, p}=0$.
Then we have $b_{p, 1}=a_{p+1, p}=0$.
If $2 \leq q \leq m-p$ and $b_{p, 1}=\cdots=b_{p, q-1}=0$, then
$b_{p, q}=a_{p+q, p}+\sum_{l=1}^{q-1}\mu(b_{p, l})a_{p+q, p+l}=0$.
Hence it follows by induction that $b_{p, 1}=\cdots=b_{p, m-p}=0$
and thus $\sum_{q=1}^{m-p}\nu(b_{p, q})=0$.
\item Suppose that there exists an integer $q$ with $1 \leq q \leq m-p$ such that
$a_{p+q, p}=1$ and $a_{p+r, p}=0$ for all $r \ne q$.
An argument similar to (1) shows $b_{p, 1}=\cdots=b_{p, q-1}=0$.
Hence $b_{p, q}=a_{p+q, p}+\sum_{l=1}^{q-1}\mu(b_{p, l})a_{p+q, p+l}=a_{p+q, p}=1$.
Since $\mu(b_{p, q})=0$,
it follows by induction that $b_{p, q+1}=\cdots=b_{p, m-p}=0$.
Thus $\sum_{r=1}^{m-p}\nu(b_{p, r})=\nu(b_{p, q})=1$.
\item Suppose that there exists an integer $q$ with $1 \leq q \leq m-p$ such that
\begin{equation*}
a_{p+r, p}=\left\{\begin{array}{ll}
0 & (1 \leq r \leq q-1), \\
-1 & (r=q), \\
a_{p+r, p+q} & (q+1 \leq r \leq m-p). \end{array}\right.
\end{equation*}
An argument similar to (1) shows $b_{p, 1}=\cdots=b_{p, q-1}=0$.
Hence $b_{p, q}=a_{p+q, p}+\sum_{l=1}^{q-1}\mu(b_{p, l})a_{p+q, p+l}=a_{p+q, p}=-1$.
Since $\mu(b_{p, q})=-1$, we have
\begin{equation*}
b_{p, q+1}=a_{p+q+1, p}+\sum_{l=1}^q\mu(b_{p, l})a_{p+q+1, p+l}
=a_{p+q+1, p}-a_{p+q+1, p+q}=0.
\end{equation*}
Furthermore, if $q+2 \leq r \leq m-p$ and $b_{p, q+1}=\cdots=b_{p, r-1}=0$, then
\begin{equation*}
b_{p, r}=a_{p+r, p}+\sum_{l=1}^{r-1}\mu(b_{p, l})a_{p+r, p+l}
=a_{p+r, p}-a_{p+r, p+q}=0.
\end{equation*}
Hence it follows by induction that $b_{p, q+1}=\cdots=b_{p, m-p}=0$
and thus $\sum_{r=1}^{m-p}\nu(b_{p, r})=\nu(b_{p, q})=1$.
\end{enumerate}
Therefore $B_m$ is Fano by Theorem \ref{main} (1). This completes the proof.
\end{proof}

\begin{example}
We consider the case $m=3, n_1=n_2=n_3=1$,
that is, 3-stage Bott manifolds.
If a Bott manifold $B_3$ is Fano, then $(a_{j, l})$ is one of the following
(the types in Table \ref{3-stage} follow the notation used in the book by Oda \cite[p.\ 91]{O}):
\begin{table}[htbp]
\begin{center}
\begin{tabular}{|r|r|r|r||r|r|r|r||r|r|r|r|}
\hline
$a_{2, 1}$ & $a_{3, 1}$ & $a_{3, 2}$ & type &
$a_{2, 1}$ & $a_{3, 1}$ & $a_{3, 2}$ & type &
$a_{2, 1}$ & $a_{3, 1}$ & $a_{3, 2}$ & type \\
\hline
\hline
$0$ & $0$ & $0$ & (6) & $0$ & $1$ & $-1$ & (8) & $1$ & $0$ & $1$ & (10) \\
\hline
$0$ & $0$ & $1$ & (9) & $0$ & $-1$ & $0$ & (9) & $1$ & $0$ & $-1$ & (10) \\
\hline
$0$ & $0$ & $-1$ & (9) & $0$ & $-1$ & $1$ & (8) & $-1$ & $0$ & $0$ & (9) \\
\hline
$0$ & $1$ & $0$ & (9) & $0$ & $-1$ & $-1$ & (7) & $-1$ & $1$ & $1$ & (10) \\
\hline
$0$ & $1$ & $1$ & (7) & $1$ & $0$ & $0$ & (9) & $-1$ & $-1$ & $-1$ & (10) \\
\hline
\end{tabular}
\caption{Fano 3-stage Bott manifolds.}
\label{3-stage}
\end{center}
\end{table}
\end{example}

\end{document}